\newtheorem{defi}{Definition}[subsection]
\newtheorem*{defi*}{Definition}
\newtheorem{rmk}[defi]{Remark}
\newtheorem*{rmk*}{Remark}
\newtheorem*{prop*}{Proposition}
\newtheorem{thm}[defi]{Theorem}
\newtheorem*{thm*}{Theorem}
\newtheorem*{lmm*}{Lemma}
\newtheorem{lmm}[defi]{Lemma}
\DeclareSymbolFont{largesymbols}{OMX}{yhex}{m}{n}
\DeclareMathAccent{\wideparen}{\mathord}{largesymbols}{"F3}
\title{On centralizers in Azumaya domains}
\author{Thomas Bitoun} \email{Thomas.Bitoun@ucalgary.ca}
\author{Justin Desrochers} \email{Justin.Desrochers@ucalgary.ca}
\begin{document}

\newtheorem{MainThm}{Theorem} 
\renewcommand{\theMainThm}{\Alph{MainThm}}. 

\maketitle

\begin{abstract} We prove a positive characteristic analogue of the classical result that the centralizer of a nonconstant differential operator in one variable is commutative. This leads to a new, short proof of that classical characteristic zero result, by reduction modulo $p.$\end{abstract}


\section{Introduction}

The theory of commuting differential operators in one variable goes back at least a century \cite{MR1575373}. Of special interest is the following fundamental result:

\begin{thm*} Let 
$P= a_n\frac{d^n}{dx}+ \dots + a_1 \frac{d}{dx}+a_0, a_i\in \mathbb{C}[x],$ be a differential operator of positive degree. Then the algebra of differential operators that commute with $P$ is commutative.
\end{thm*}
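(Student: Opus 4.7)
The plan is to deduce the classical theorem from its positive characteristic analogue (the main result of this paper) by a reduction modulo $p$ argument, as foreshadowed in the abstract. Let $Q_1,Q_2\in \mathbb{C}[x]\langle d/dx\rangle$ be two operators that commute with $P$; the goal is to show $[Q_1,Q_2]=0$. Only finitely many elements of $\mathbb{C}$ appear as coefficients of $P,Q_1,Q_2$, so letting $R\subset\mathbb{C}$ be the $\mathbb{Z}$-subalgebra they generate, all three operators live in the Weyl algebra $R[x]\langle d/dx\rangle$ and the relations $[P,Q_i]=0$ hold there.

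Let $a_n\in R[x]$ be the leading coefficient of $P$ and fix a nonzero coefficient $g\in R$ of $a_n$. Replace $R$ by the localization $R[1/g]$, which is still a finitely generated $\mathbb{Z}$-algebra and a subring of $\mathbb{C}$, hence a domain. By the Nullstellensatz for finitely generated $\mathbb{Z}$-algebras, every maximal ideal $\mathfrak{m}\subset R$ has residue field $k_\mathfrak{m}:=R/\mathfrak{m}$ a finite field of positive characteristic. Since $g\notin\mathfrak{m}$, the reduction $\overline{P}\in k_\mathfrak{m}[x]\langle d/dx\rangle$ still has positive degree in $d/dx$, so the positive characteristic analogue applies and yields $[\overline{Q}_1,\overline{Q}_2]=0$ in $k_\mathfrak{m}[x]\langle d/dx\rangle$.

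Hence every coefficient of $[Q_1,Q_2]\in R[x]\langle d/dx\rangle$ lies in every maximal ideal of $R$, and so in the Jacobson radical of $R$. Since $R$ is a Jacobson ring (being of finite type over $\mathbb{Z}$) and a domain, its Jacobson radical equals its nilradical, which vanishes. Therefore $[Q_1,Q_2]=0$, and the centralizer of $P$ is commutative. The substantive content is the positive characteristic analogue itself; the reduction above is a clean but formal spreading-out, whose only delicate point is the localization at $g$, which uniformly guarantees that $\overline{P}$ retains positive order at every maximal ideal of $R$, avoiding the need to track a finite exceptional set of primes.
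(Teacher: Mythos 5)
Your overall strategy (spreading out over a finitely generated $\mathbb{Z}$-algebra, reducing at maximal ideals with finite residue fields, and using that the Jacobson radical of a Jacobson domain vanishes) is the same as the paper's, but there is a genuine gap where you invoke the positive characteristic result. That result requires the reduction $\overline{P}$ to be \emph{noncentral} in $A_1(k_{\mathfrak m})$, and in characteristic $p$ the center of the Weyl algebra is $k_{\mathfrak m}[x^p,(d/dx)^p]$, so ``positive degree in $d/dx$'' does not imply ``noncentral.'' For instance, if $P=(d/dx)^2$, then at any maximal ideal of residue characteristic $2$ the reduction $\overline{P}=\partial^2$ is central, its centralizer is all of $A_1(k_{\mathfrak m})$, and the lemma gives no information; inverting a coefficient of the leading term does nothing to exclude such primes. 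Consequently your claim that $[\overline{Q}_1,\overline{Q}_2]=0$ at \emph{every} maximal ideal of $R[1/g]$ is unjustified, and the final step (every coefficient of $[Q_1,Q_2]$ lies in every maximal ideal) collapses.

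The missing ingredient is exactly the extra localization the paper performs: besides inverting coefficients that guarantee the degree survives reduction, it also inverts $n!$ (concretely, the paper works with the total symbol and inverts $n!$ times the product of the nonzero top--total-degree coefficients of $a$). This forces the residue characteristic $p_s$ at every remaining closed point to exceed $n$, hence not to divide the degree of $\overline{P}$, which \emph{does} guarantee noncentrality: a central element is a $k$-linear combination of monomials $x^{pi}\partial^{pj}$, so its degree (in $d/dx$, or total) is divisible by $p$. With that modification your argument goes through; alternatively you could keep your localization and note that the maximal ideals of residue characteristic $>n$ are still dense, so their intersection already vanishes. Either way, some control of the residue characteristic relative to the degree of $P$ is indispensable and is the one nonformal point of the reduction that your write-up omits.
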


The modern version of this result is attributed to Flanders in \cite{MR95305} and has been given at least two rather different proofs \cite{MR95305, MR2083753}.
In this note, we are interested in an analogous statement in positive characteristic. Namely for the first Weyl algebra $A_1(k)$ over a field $k$ of positive characteristic. We prove a generalization of the following:

\begin{thm*}

Let $k$ be a field of positive characteristic and let $P\in A_1(k)$ be noncentral. Then the centralizer of $P$ is a commutative algebra.
\end{thm*}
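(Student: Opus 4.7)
My plan is to exploit the Azumaya structure of $A_1(k)$ over its center by passing to the generic fiber, where the problem becomes an exercise in central simple algebras of prime degree.

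Specifically, recall that $Z := Z(A_1(k)) = k[x^p,\partial^p]$ and that $A_1(k)$ is a free $Z$-module of rank $p^2$, Azumaya over $Z$. Setting $K = \operatorname{Frac}(Z)$, the generic fiber $D := A_1(k) \otimes_Z K$ is a central simple $K$-algebra of dimension $p^2$. Since $A_1(k)$ is a domain and we are inverting central regular elements, $D$ is itself a domain, hence a division algebra of degree $p$ over $K$. Freeness also yields $A_1(k) \cap K = Z$ inside $D$, so a noncentral $P \in A_1(k)$ satisfies $P \notin K$, and $C_{A_1(k)}(P) \subseteq C_D(P)$; it therefore suffices to prove that $C_D(P)$ is commutative.

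For that, observe that $K[P] \subseteq D$ is a commutative finite-dimensional $K$-subalgebra without zero divisors, i.e.\ a field extension $K[P]/K$ of degree $n := [K[P]:K]$. By elementary degree considerations $n \mid p^2$ and $n > 1$, whence $n \in \{p, p^2\}$. The double centralizer theorem applied to the simple subalgebra $K[P]$ of the central simple algebra $D$ gives $\dim_K C_D(K[P]) = p^2/n$. Since $K[P] \subseteq C_D(K[P])$, a dimension comparison in either case forces $C_D(P) = C_D(K[P]) = K[P]$, which is commutative.

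The main obstacle is assembling the structural input on $D$: the fact that $A_1(k)$ is an Azumaya domain, free of rank $p^2$ over its center, which is needed to conclude both that $D$ is actually a division algebra and that $A_1(k) \cap K = Z$. Once this is in hand, the primality of $p$ is what makes the argument go through, by leaving no room for a strict intermediate subfield between $K$ and $K[P]$ on which a noncommutative centralizer could live. The same outline should adapt to any Azumaya domain of prime PI-degree, which is presumably the shape of the generalization alluded to in the introduction.
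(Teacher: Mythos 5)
Your proof is correct, and its overall strategy coincides with the paper's: localize $A_1(k)$ at its center to get a division algebra $D$ of degree $p$ over $K$, then force commutativity of the centralizer by a dimension count that leaves no room between $K$ and a maximal subfield. The one substantive difference is the tool doing the counting. You apply the double centralizer theorem to the simple subalgebra $K[P]$ of the central simple algebra $D$, which identifies $C_D(K[P])=K[P]$ on the nose (your ``either case'' is really one case plus a vacuous one, since $n=p^2$ contradicts $K[P]\subseteq C_D(K[P])$, but the conclusion stands). The paper instead argues more elementarily: it only uses that the $K$-dimension of a division subring of $A_K$ divides $\dim_K A_K=p^2$ (because $A_K$ is a free module over it), and it compares the localizations of the centralizer at the fraction fields of its own center and of $Z[a]$. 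This buys two things: the lemma is stated for an arbitrary domain that is finite of rank $p^2$ over its center, with no Azumaya or central-simple-algebra input (so the generalization is slightly broader than the ``Azumaya domain of prime PI-degree'' you anticipate), and the bookkeeping with $Z[a]$ directly yields the supplementary claim that the fraction field of the centralizer equals that of $Z[P]$ --- a fact your identification $C_D(P)=K[P]=K(P)$ also delivers, though you do not record it. Your route is shorter at the generic point but quotes a heavier standard theorem.
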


We also prove that the fraction field of the centralizer is isomorphic to the fraction field of $Z[P],$ where $Z$ is the center of $A_1(k).$ Note that this is simpler than the analogous assertion for a complex differential operator $Q,$ for which the fraction field of the centralizer is in general only a finite extension of the field of rational functions in $Q,$ see \cite[Corollary 1]{MR95305}.
 Unfortunately, neither the elementary methods from \cite{MR95305} (because of operators of degree divisible by the characteristic), nor those from \cite{MR2083753} (because all operators are algebraic over the center, which is of dimension $2$) do adapt to the positive characteristic. Nevertheless, we present here a simple argument based on a dimension count. 
We note finally that, in the spirit of \cite{MR794737} and \cite{MR2337879}, this provides a new and very short proof of the classical characteristic zero theorem above, by reduction modulo primes.   

\subsection*{Acknowledgements}

We are grateful to Konstantin Ardakov for suggesting a simplification of our initial argument. We also thank the referees for their comments which have helped improve the exposition.
This work was supported by the Natural Sciences and Engineering Research Council of Canada (NSERC), [RGPIN-2020-06075].
Cette recherche a été financée par le Conseil de recherches en sciences naturelles et en génie du Canada (CRSNG), [RGPIN-2020-06075].

\section{Main result}

Here is the main result of this note.

\begin{lmm} Let $A$ be a domain that is a finitely generated module of rank $p^2$ over its center $Z,$ for $p$ a prime number. Then the centralizer $B_a$ of every noncentral element $a$ is a commutative ring. Moreover, the natural embedding $Z[a] \subseteq B_a$ induces an isomorphism of fields of fractions. 
\end{lmm}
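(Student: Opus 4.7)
The plan is to localize at the center and reduce the statement to a fact about subfields of a central division algebra of prime degree. Set $K := \operatorname{Frac}(Z)$ and $D := A \otimes_Z K$, the central localization of $A$ at $Z \setminus \{0\}$. Since this multiplicative set consists of central nonzerodivisors, $A$ embeds in $D$, and $D$ is a $K$-algebra of dimension $p^2$. A quick verification shows $D$ is a domain (any relation $xy=0$ in $D$ can be cleared of denominators to a relation in $A$) whose center is exactly $K$ (a would-be additional central element $a/s$ yields $s(ab-ba)=0$ in $A$ for all $b$, so $a\in Z$). Being a finite-dimensional domain over $K$, $D$ is a division algebra, hence a central division algebra of degree $p$ over $K$.

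Next, I would analyze $K[a]$ inside $D$. The same clearing-of-denominators argument shows that if $a$ is noncentral in $A$ then $a \notin K$, so $K \subsetneq K[a]$. Since $a$ is algebraic over $K$ and $K[a]$ is a commutative subdomain of the division algebra $D$, it is a subfield. Its degree over $K$ divides $\deg D = p$ and is strictly greater than $1$, so equals $p$. Thus $K[a]$ is a maximal subfield of $D$, and by the double-centralizer theorem its centralizer in $D$ is $K[a]$ itself.

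The lemma now follows quickly. Any element of $B_a$ commutes with $a$ and with $K$ (since $K$ is central in $D$), hence lies in $C_D(K[a]) = K[a]$. Therefore $B_a \subseteq K[a]$ is commutative. For the second assertion, note that $K[a] = \operatorname{Frac}(Z[a])$ (every element of $K[a]$ has the form $\sum k_i a^i$ with $k_i\in K$, and clearing a common denominator from $Z$ puts it in $\operatorname{Frac}(Z[a])$), so the tower $Z[a] \subseteq B_a \subseteq K[a]$ forces all three to share the same field of fractions.

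The main subtlety, such as it is, is ensuring that the rank hypothesis really pins down $[K[a]:K]$. The primality of $p$ is doing the critical work here: it is what forces a proper field extension $K \subsetneq K[a] \subseteq D$ to be maximal, so that the centralizer in $D$ collapses to $K[a]$ itself. For a composite rank $n^2$ one could have $1 < [K[a]:K] < n$, and the argument would only yield an inclusion $B_a \subseteq C_D(K[a])$ with $C_D(K[a])$ genuinely noncommutative in general.
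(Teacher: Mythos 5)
Your proof is correct. It shares the paper's skeleton---localize at the center to get a division ring $D=A\otimes_Z K$ of dimension $p^2$ over $K$, and let the primality of $p$ force the relevant dimension to be $p$---but the key technical input is different. The paper never isolates $K[a]$ as a maximal subfield and does not invoke the double centralizer theorem: it localizes $B_a$ at its own center $R$ (and at $Z[a]$) and uses only the elementary divisibility $\dim_K D' \mid \dim_K(A\otimes_Z K)$ for any division ring $D'$ intermediate between $K$ and $A\otimes_Z K$ (because $A\otimes_Z K$ is a free $D'$-module). Applying this twice---first over $K$ to get $\dim_K(L\otimes_R B_a)=p$, then over $L=\operatorname{Frac}(R)$---forces $L\otimes_R B_a=L$, i.e.\ $B_a\subseteq L$ is commutative. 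Your route through ``a subfield of a degree-$p$ central division algebra properly containing $K$ is maximal, hence equal to its own centralizer'' reaches the same endpoint and identifies the fraction field $K[a]=\operatorname{Frac}(Z[a])$ somewhat more directly, at the price of quoting the double centralizer theorem; even that reliance can be softened, since $C_D(F)=F$ for a maximal subfield $F$ follows from the elementary observation that any $x\in C_D(F)\setminus F$ would generate a strictly larger subfield $F[x]$. Your closing remark about composite rank is exactly the content of the paper's second Remark (the centralizer of $x$ in $A_2(k)$ is noncommutative).
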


\begin{proof} Let $a\in A$ be an element which does not belong to the center $Z,$ and let $B=B_a$ be the centralizer of $a$ in $A.$ We let $K$ be the fraction field of $Z, L$ be the fraction field of the center $R$ of $B,$ and $L'$ be the fraction field of $Z[a].$ Note that $L' \subseteq L.$

Consider $A_K:= K\otimes_Z A, B_L:= L\otimes_R B,$ and $B_{L'}:= L'\otimes_{Z[a]}B.$ Since these are all localizations of domains, they are domains. Note that $A_K$ is of finite dimension $p^2$ over its central subfield $K.$ Hence it is a division ring, as multiplication by a nonzero element is injective and hence invertible, by the finite dimension. Thus there are natural inclusions of the localizations $B_{L'}\subseteq B_L\subseteq A_K,$ and $B_L$ and $B_{L'}$ are also division rings, since they are finite dimensional over their central subfield $K.$ 
Moreover the dimension $\mathrm{dim}_K(B_L)$ (resp. $\mathrm{dim}_K(B_{L'})$) divides $\mathrm{dim}_K(A_K)=p^2.$ Thus $\mathrm{dim}_K(B_L)$ (resp. $\mathrm{dim}_K(B_{L'})$) is $1, p$ or $p^2.$ But the inclusions $K\subsetneq B_L \subsetneq A_K$ (resp. $K\subsetneq B_{L'} \subsetneq A_K$) are proper, since $a$ is a noncentral element. Hence  $\mathrm{dim}_K(B_L)=p= \mathrm{dim}_K(B_{L'}).$ Thus $B_{L'}=B_L.$

Finally, we have that $\mathrm{dim}_L(B_L)$ (resp. $\mathrm{dim}_{L'}(B_{L'})$) divides $\mathrm{dim}_K(B_L)=p.$ Thus $\mathrm{dim}_L(B_L)$ (resp. $\mathrm{dim}_{L'}(B_{L'})$) is either $1$ or $p.$ But the inclusion $K\subsetneq L$ (resp. $K\subsetneq L'$) is proper as $a$ is not central, hence $\mathrm{dim}_L(B_L) < \mathrm{dim}_K(B_L)$ (resp. $\mathrm{dim}_{L'}(B_{L'}) < \mathrm{dim}_K(B_{L'})$). We conclude that $\mathrm{dim}_L(B_L)=1= \mathrm{dim}_{L'}(B_{L'}),$ thus $B\subseteq L$ is commutative and the fraction field of $B$ is $B_L= B_{L'}=L'.$ \end{proof}

\begin{rmk} If $A$ is as in the lemma and is also an algebra over a field $k,$ then $k[a]$ is included in $B_a.$ However $B_a$ is not necessarily finite over $k[a].$ For example, in the first Weyl algebra over a field $k$ of positive characteristic $p$ with coordinate $x,$ we have $B_x= k[x, (\frac{d}{dx})^p].$ This algebra is not a finitely generated module over $k[x].$ \end{rmk}

\begin{rmk} In the case of a domain of higher rank $p^n$ over its center, the centralizer of a noncentral element is not necessarily commutative. For example, the centralizer of $x$ in the second Weyl algebra $A_2(k),$ with coordinates $x$ and $y,$ is generated over the positive characteristic $p$ field $k$ by $x, \partial_x^p, y$ and $\partial_y,$ and is thus not commutative. Nevertheless, we believe that the rank of a centralizer over the center is a useful invariant here too, and hope to consider it in a future work. \end{rmk}

\section{A corollary} 

The lemma applies in particular to the first Weyl algebra over a field of positive characteristic. This leads to a proof by reduction modulo $p$ of the following classical result \cite{MR95305, MR2083753}.

\begin{thm} Let $k$ be a field of characteristic zero. Then the centralizer of every nonconstant polynomial differential operator in one variable over $k$ is a commutative ring.    
\end{thm}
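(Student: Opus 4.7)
The plan is a reduction modulo $p$ argument, invoking the lemma above applied to the first Weyl algebra over a finite field. Fix a nonconstant $P \in A_1(k)$ and two elements $Q_1, Q_2$ in the centralizer of $P$; I aim to show $[Q_1, Q_2] = 0$.

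First, I would descend to a finitely generated $\mathbb{Z}$-subalgebra of $k$. Pick a nonzero term $c\, x^{i_0} \partial^{j_0}$ of $P$ with $(i_0, j_0) \neq (0,0)$, which exists since $P$ is nonconstant. Let $R_0 \subseteq k$ be the $\mathbb{Z}$-subalgebra generated by the (finitely many) coefficients of $P, Q_1, Q_2$, and set $R = R_0[c^{-1}, (i_0!\, j_0!)^{-1}]$. Then $R$ is a finitely generated $\mathbb{Z}$-algebra, still a domain since $R \subseteq k$, and $P, Q_1, Q_2 \in A_1(R)$.

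For each maximal ideal $\mathfrak{m}$ of $R$, the residue field $R/\mathfrak{m}$ is a finite field by the Nullstellensatz for finitely generated $\mathbb{Z}$-algebras; its characteristic $p$ satisfies $p > \max(i_0, j_0)$ (since all primes $\leq \max(i_0, j_0)$ are inverted in $R$), and $c \notin \mathfrak{m}$ (since $c$ is inverted in $R$). Consequently, the reduction $\bar{P} \in A_1(R/\mathfrak{m})$ retains the nonzero monomial $\bar{c}\, x^{i_0}\partial^{j_0}$, which is not in the center $(R/\mathfrak{m})[x^p, \partial^p]$ because $p$ fails to divide whichever of $i_0, j_0$ is positive. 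Thus $\bar{P}$ is noncentral in $A_1(R/\mathfrak{m})$, a domain of rank $p^2$ over its center, and the lemma yields that the centralizer of $\bar{P}$ is commutative; in particular, $[\bar{Q}_1, \bar{Q}_2] = 0$.

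This forces every coefficient of $[Q_1, Q_2] \in A_1(R)$ to lie in $\mathfrak{m}$ for every maximal $\mathfrak{m}$ of $R$. Since $R$ is Jacobson with zero Jacobson radical (being a finitely generated $\mathbb{Z}$-algebra that is also a domain), each such coefficient vanishes, so $[Q_1, Q_2] = 0$ in $A_1(k)$ and the centralizer of $P$ is commutative. The main delicacy I expect lies in the choice of $R$: one must invert enough elements so that $\bar{P}$ stays noncentral in \emph{every} reduction, not merely in almost all of them, so that the Jacobson radical argument applies cleanly. Inverting $c$ together with all primes up to $\max(i_0, j_0)$ is precisely what is needed.
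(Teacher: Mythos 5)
Your proposal is correct and follows essentially the same route as the paper: adjoin the coefficients to a finitely generated $\mathbb{Z}$-algebra, invert enough elements so that the reduction of $P$ at every maximal ideal remains noncentral in the rank-$p^2$ Azumaya domain $A_1(R/\mathfrak{m})$, apply the lemma there, and conclude from the vanishing of the Jacobson radical. The only (immaterial) difference is your certificate of noncentrality --- preserving a single non-constant monomial $c\,x^{i_0}\partial^{j_0}$ and comparing with the basis of $(R/\mathfrak{m})[x^p,\partial^p]$, versus the paper's inversion of $n!$ times the nonzero top total-degree coefficients so that the total degree stays positive and prime to $p$.
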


\begin{proof}
For an arbitrary commutative ring $k',$ we denote by $A(k')$ the ring of polynomial differential operators in one variable over $k',$ i.e. the first Weyl algebra over $k'.$ Moreover, for all $ a \in A(k'),$ we let the total degree $\mathrm{tot}(a)$ of $a$ be the degree of the total symbol of $a$ as a $k'$-polynomial in $2$ variables.  

Let $a\in A(k)$ be nonconstant and let $P, Q\in A(k)$ be operators commuting with $a.$ We want to show that the commutator $C:= [P,Q]$ vanishes. We let $S$ be the ring generated by the coefficients of $a, P,$ and $Q,$ lying in $k.$ We have $a, P, Q, C \in A(S)\subseteq A(k).$  

Let $n$ be the total degree of $a= \Sigma_{i+j\leq n} a_{ij} x^i\frac{d^j}{dx},$ and let $u= N \times \Pi_{i+j=n}a_{ij},$ where $N$ is the factorial of $n$ and the product $\Pi$ is taken over nonzero coefficients only. We note that the ring $S[\frac{1}{u}]$ is Jacobson \cite[Cor.10.4.6]{MR217086}. Hence the closed points of $\mathrm{Spec}(S[\frac{1}{u}])$ form a dense subset, and for each closed point $s,$ the residue field $k(s)$ is a finite field. Let us denote by $a_s, P_s, Q_s,$ and $C_s$ the images of $a, P, Q,$ and $C$ in $A(k(s)),$ respectively. Then by the choice of $u,$ for all closed point $s$ of $\mathrm{Spec}(S[\frac{1}{u}]),$ the element $a_s$ is of positive total degree which is prime to the characteristic of $k(s).$ Hence $a_s$ is not central, $P_s$ and $Q_s$ commute with $a_s,$ and $C_s=[P_s,Q_s].$ 

Since the ring $A(k(s)$) is a domain of rank $p_s^2$ over its center \cite[Thm. 2 and \S 4]{MR335564}, for $p_s$ the characteristic of $k(s),$ we can apply the lemma. Hence we conclude that $C_s=0$ for all closed points $s$ of $\mathrm{Spec}(S[\frac{1}{u}]).$ This holds generically since the closed points of $\mathrm{Spec}(S[\frac{1}{u}])$ are dense. Thus $C=0.$ \end{proof}

\bibliography{bibfilex}
\bibliographystyle{plain}

\end{document}